\documentclass[12pt]{article}
\usepackage[latin9]{inputenc}
\usepackage{units}
\usepackage{amsmath}
\usepackage{amssymb}
\usepackage{amsthm}

\makeatletter

\usepackage{graphicx} 

\usepackage{amsfonts}
\usepackage{algorithmic}\usepackage{algorithm}
\newtheorem{theo}{Theorem}[section]

\newtheorem{lem}[theo]{Lemma}
\newtheorem{remark}[theo]{Remark}

\newcommand{\bb}[1]{\boldsymbol{\mathrm{#1}}}

\newcommand{\SSS}{\mathbb{S}}

\newcommand{\RR}{\mathbb{R}}
\newcommand{\MM}{\mathbb{M}}

\newcommand{\Aa}{\bb{A}}
\newcommand{\Bb}{\bb{B}}
\newcommand{\Cc}{\bb{C}}



\makeatother

\begin{document}

\title{Asymptotic metrics on the space of matrices under the commutation relation}

\author{Klaus Glashoff and Michael M. Bronstein\\
\small Institute of Computational Science\\
\small Faculty of Informatics,\\
\small Universit{\`a} della Svizzera Italiana\\ 
\small Lugano, Switzerland\\
}
\maketitle
\begin{abstract}
We show that the norm of the commutator defines ``almost a metric'' on the quotient space of commuting matrices, in the sense that it is a semi-metric satisfying the triangle inequality asymptotically for large matrices drawn from a ``good'' distribution. 
We provide theoretical analysis of this results for several distributions of matrices, and show numerical experiments confirming this observation.    
\end{abstract}

\section{Introduction}

Almost commuting matrices have attracted interest since the 1950s, mainly in the field of quantum mechanics, where it was important to establish whether two almost commuting matrices are close to matrices that exactly commute \cite{Bernstein_Almost_commuting,Pearcy1979332,Rordam1996,Huang_Lin,Hastings2009,Filonov2010arXiv1008.4002F,Loring_Sorensen2010,Glebsky2010arXiv1002.3082G}.  
It is well-known that commuting matrices are jointly diagonalizable; 
in \cite{klaus13}, we extended this result to the approximate case, showing that almost commuting matrices are almost jointly diagonalizable. 
This result relates to recent works on methods for approximate joint diagonalization of matrices and their applications  \cite{CardosoBlind1993,Cardoso_pertubation1994,Cardoso96jacobiangles,2012arXiv1209.2295E,KovBBKK:2013:EG}. 
In particular, \cite{KovBBKK:2013:EG} used the joint diagonalizability of matrices as a criterion of their similarity in the context of 3D shape analysis. In light of \cite{klaus13}, we can consider the norm of the commutator instead of performing a computationally expensive approximate joint diagonalization.

In this paper, we are interested in defining a metric between the equivalence classes of commuting matrices using the norm of their commutator. We show that while not a metric, such a construction is a metric asymptotically for sufficiently large matrices with a ``good'' distribution.

\section{Background} 

Let $\Aa, \Bb \in \MM_n(\RR)$ denote two $n\times n$ real matrices, assuming hereinafter $n\geq 2$. We define their {\em commutator} as $[\Aa,\Bb] = \Aa\Bb - \Bb\Aa$. 
In this paper, we study the properties of the Frobenius norm of the commutator, $\| [\Aa,\Bb] \|_\mathrm{F} = \left( \sum_{ij} [\Aa,\Bb]^2_{ij}\right)^{1/2}$. 

A non-negative function $d: \MM_n\times \MM_n \rightarrow \RR_+\cup\{0\}$ is called a {\em metric} if it satisfies the following properties for any $\Aa,\Bb,\Cc \in \MM_n$:

\noindent (M1) {\em Symmetry:} $d(\Aa,\Bb) = d(\Bb,\Aa)$. 

\noindent (M2) {\em Identity:} $d(\Aa,\Bb) = 0$ iff $\Aa = \Bb$. 

\noindent (M3) {\em Triangle inequality:} $d(\Aa,\Bb) + d(\Bb,\Cc) \geq d(\Aa,\Cc)$. 

\noindent In the case when (M2) holds only in one direction ($\Aa = \Bb \Rightarrow d(\Aa,\Bb)=0$), $d$ is called a {\em pseudo-metric}; $d$ satisfying (M1)--(M2) only is called a {\em semi-metric}.

We are interested in a pseudo-metric $d$ on $\MM_n$ satisfying $d(\Aa,\Bb)=0$ for all $\Aa, \Bb \in \MM_n$ such that $[\Aa,\Bb]=0$. Such a pseudo-metric 
can be regarded as measure of the similarity of matrices under which commuting matrices are equivalent. - In the following, we will omit the prefix `pseudo'.

One can easily show that $d(\Aa,\Bb) = \| [\Aa,\Bb]\|_\mathrm{F}$ is not a metric but a semi-metric only, i.e. it violates the triangle inequality (M3): 
a counterexample for $n=2$ is 
\begin{equation*}
\Aa =\left(\begin{array}{cc}
2 & 1\\
1 & 1
\end{array}\right),\,\,\,
\Bb = \left( 
             \begin{array}{cc}
1& 0\\
0 & 1
             \end{array}
\right), \,\,\,
\Cc = \left( 
             \begin{array}{cc}
0 & 1\\
1 & 0
             \end{array}
\right), 
\end{equation*}
for which $\Delta(\Aa,\Bb,\Cc) =  \| [\Aa,\Bb]\|_\mathrm{F} +  \| [\Bb,\Cc]\|_\mathrm{F} -  \| [\Aa,\Cc]\|_\mathrm{F}= -\sqrt{2} < 0$. 

However, when taking matrices $\Aa,\Bb,\Cc$ with i.i.d. normal elements, one obtains the probability distribution of $\Delta(\Aa,\Bb,\Cc)$ as shown in Figure~1; other distributions produce a similar behavior. 
A surprising observation is that for increasing matrix size $n$, the probability of $\Delta(\Aa,\Bb,\Cc)<0$ (i.e., having the triangle inequality violated) decreases. Thus, even though not a metric in the strict sense, the commutator norm $\| [\Aa,\Bb]\|_\mathrm{F}$ behaves like a metric asymptotically.
In the next section, we provide analysis of this behavior.

\begin{figure}
\center{
  \includegraphics[width=1\linewidth]{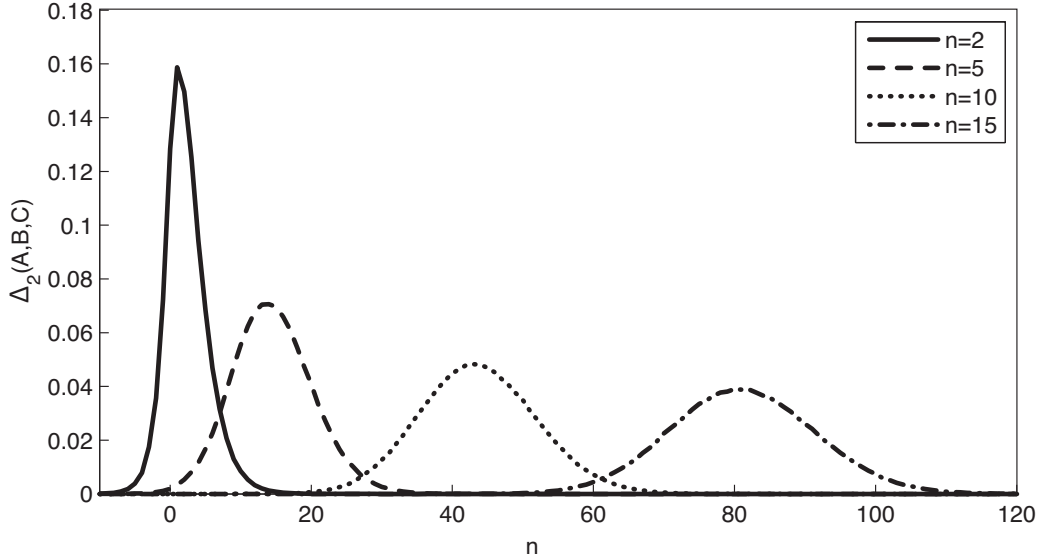}
  \caption{\label{fig:motivation} \small Distribution of  $\Delta(\Aa,\Bb,\Cc)$ for different values of $n$; matrices $\Aa,\Bb,\Cc \in \MM_n$ with normal i.i.d. elements. The negative tail of the distribution decreases with the increase of $n$. }
}
\end{figure}

\section{Asymptotic triangle inequality} 

Let use denote by $\SSS_n = \{ \Aa\in\MM_n(\RR) :  \|\Aa\|_\mathrm{F}=1\}$ the unit sphere of $n\times n$ matrices. 
We consider general distances of the form $d_{\alpha}(\Aa,\Bb) = \|\Aa\Bb - \Bb\Aa\|_\mathrm{F}^\alpha$, where $\alpha >0$, and define the {\em triangle inequality defect} as 
$$\Delta_{\alpha}(\Aa,\Bb,\Cc) = d_{\alpha}(\Aa,\Bb) + d_{\alpha}(\Bb,\Cc) - d_{\alpha}(\Aa,\Cc); $$ 
for $\alpha=1$ we obtain the case discussed in the previous section (using the notation $\Delta = \Delta_{1}$); for $\alpha=2$ we can relate to the results of B{\"ottcher} and Wenzel \cite{Boettcher2005216,Bottcher20081864} who studied the statistical properties of squared norms of matrix commutators. 
%
%

We can formulate our observation in Section 2 as the following 

\begin{theo}[\bf asymptotic triangle inequality] Let $\Aa, \Bb, \Cc$ be independently drawn from a uniform distribution on the unit sphere $\SSS_n$, and $\alpha=1$ or $\alpha=2$. Then, $$\lim_{n\rightarrow \infty} \mathrm{P}(\Delta_{\alpha}(\Aa,\Bb,\Cc)<0)=0.$$  
\end{theo}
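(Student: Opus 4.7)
The plan is to deduce the theorem from a concentration estimate for the scalar quantity $\|[\Aa,\Bb]\|_\mathrm{F}^2$. Set $\mu_n := \EE\|[\Aa,\Bb]\|_\mathrm{F}^2$ and $\sigma_n^2 := \mathrm{Var}\|[\Aa,\Bb]\|_\mathrm{F}^2$ for a single pair $\Aa,\Bb$ independently uniform on $\SSS_n$. If I can establish $\sigma_n^2/\mu_n^2 \to 0$, then Chebyshev's inequality applied in turn to the pairs $(\Aa,\Bb)$, $(\Bb,\Cc)$, $(\Aa,\Cc)$ — each having the same marginal distribution by exchangeability, though not jointly independent — together with a union bound gives $\Delta_2/\mu_n \to 1 > 0$ in probability, so $\mathrm{P}(\Delta_2<0)\to 0$. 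For $\alpha=1$, the continuous mapping theorem applied to $t\mapsto\sqrt{t}$ at $t=\mu_n>0$ transfers the concentration to $\|[\Aa,\Bb]\|_\mathrm{F}$, and the same argument yields $\mathrm{P}(\Delta_1<0)\to 0$.

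I would compute $\mu_n$ via the radial-angular decomposition: if $\Aa_g$ has i.i.d.\ $N(0,1)$ entries, then $r_A := \|\Aa_g\|_\mathrm{F}$ is independent of $\Aa_u := \Aa_g/r_A$, and $\Aa_u$ is uniform on $\SSS_n$ (likewise for $\Bb$). Bilinearity of the commutator gives $\|[\Aa_g,\Bb_g]\|_\mathrm{F}^2 = r_A^2 r_B^2\,\|[\Aa_u,\Bb_u]\|_\mathrm{F}^2$, so $\EE\|[\Aa_g,\Bb_g]\|_\mathrm{F}^2 = \EE r_A^2 \cdot \EE r_B^2 \cdot \mu_n = n^4 \mu_n$. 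A short second-moment count over the Gaussian entries (only Wick contractions of $a$-with-$a$ and $b$-with-$b$ pairs survive) yields $\EE\|[\Aa_g,\Bb_g]\|_\mathrm{F}^2 = 2n^3 - 2n$, so $\mu_n = 2/n - 2/n^3 > 0$ for every $n\ge 2$.

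For the variance, the same factorization gives $\EE\|[\Aa_u,\Bb_u]\|_\mathrm{F}^4 = \EE\|[\Aa_g,\Bb_g]\|_\mathrm{F}^4 / (n^4+2n^2)^2$, and the numerator is the degree-$8$ Gaussian commutator moment studied by B\"ottcher and Wenzel \cite{Boettcher2005216,Bottcher20081864}. Its leading contribution is $(2n^3-2n)^2$ (from the disconnected Wick pairings) and the remaining connected pairings contribute $O(n^5)$, so after dividing by $(n^4+2n^2)^2$ one obtains $\sigma_n^2 = O(1/n^3)$. Since $\mu_n^2 \sim 4/n^2$, this gives $\sigma_n^2/\mu_n^2 = O(1/n)$, the required decay.

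The main obstacle is this fourth-moment estimate. A soft Lipschitz-based concentration on the product sphere is not sharp enough: the map $\Aa \mapsto \|[\Aa,\Bb]\|_\mathrm{F}^2$ has a dimension-free Lipschitz constant on $\SSS_n$ (since $\|[\Aa,\Bb]\|_\mathrm{F}\le\sqrt{2}$ there), so L\'evy's lemma on $\SSS_{n^2-1}$ yields fluctuations of order $1/n$ — the same order as $\mu_n$ itself. To beat the mean one must exploit that typical commutators are much smaller than their worst-case value, and this seems to force the combinatorial Wick-expansion route above rather than a purely geometric concentration argument.
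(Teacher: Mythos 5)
Your high-level plan (control the mean and variance of $\|[\Aa,\Bb]\|_\mathrm{F}^2$, use Chebyshev pairwise, finish with a union bound) is sound and, for $\alpha=2$, is essentially the argument the paper gives: they cite B\"ottcher and Wenzel for $\EE\|[\Aa,\Bb]\|_\mathrm{F}^2=2/n-2/n^3$ and $\mathrm{Var}\|[\Aa,\Bb]\|_\mathrm{F}^2=8/n^4+\mathcal{O}(n^{-5})$ on the sphere, then bound $\mathrm{Var}(\Delta_2)$ via Cauchy--Schwarz on the covariances and apply Chebyshev to $\Delta_2$ directly, where you would instead apply Chebyshev to each pair and union-bound; these are interchangeable. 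Your radial--angular reduction to the Gaussian model is a legitimate alternative to citing the sphere formulas, though your sketch of the normalized fourth moment is loose: you divide $\EE\|[\Aa_g,\Bb_g]\|_\mathrm{F}^4$ by $(n^4+2n^2)^2$ and claim $\sigma_n^2=\mathcal{O}(n^{-3})$, but the disconnected piece $(2n^3-2n)^2/(n^4+2n^2)^2$ does \emph{not} equal $\mu_n^2=(2n^3-2n)^2/n^8$, so a nontrivial cancellation has to be tracked; doing so carefully (or just using $\mathrm{Var}(Y_nZ_n)\ge\sigma_n^2(\EE Z_n)^2$ with $Z_n=r_A^2r_B^2$) actually gives the sharper $\sigma_n^2=\mathcal{O}(n^{-4})$ that matches B\"ottcher--Wenzel. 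Your stated $\mathcal{O}(n^{-3})$ is a valid upper bound and suffices for $\sigma_n^2/\mu_n^2\to 0$, so there is no fatal gap, but the bookkeeping as written does not justify even that.

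Where you genuinely diverge from the paper is the $\alpha=1$ case, and your route is cleaner. The paper proves a delta-method lemma (Lemma 3.2 in the Appendix) that carefully estimates $\EE X_n^{1/2}$ and $\mathrm{Var}(X_n^{1/2})$ when $\mu_n\to 0$, splitting the integral to handle the singularity of $\sqrt{\cdot}$ at the origin. You instead observe that $X_n/\mu_n\to 1$ in probability, apply the continuous mapping theorem to $t\mapsto\sqrt{t}$ \emph{at $t=1$} (not at the vanishing point $\mu_n$), and conclude $\sqrt{X_n}/\sqrt{\mu_n}\to 1$; summing three such ratios gives $\Delta_1/\sqrt{\mu_n}\to 1$ in probability, hence $\mathrm{P}(\Delta_1<0)\to 0$. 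This sidesteps the delta-method machinery entirely and in fact shows the $\alpha$-dependence is inessential: the same one-line continuity argument handles any fixed $\alpha>0$. The trade-off is that the paper's lemma delivers the quantitative rate $\mathrm{P}(\Delta_1<0)=\mathcal{O}(n^{-2})$, whereas your argument as stated only yields convergence to zero (a rate can be recovered by keeping the $\epsilon$ explicit in Chebyshev, but you do not do so). Finally, your closing remark that a L\'evy/Lipschitz concentration bound on the product of spheres is not sharp enough is correct and a useful observation: the naive Lipschitz constant gives fluctuations of the same order as the mean, which is exactly why both you and the paper are forced into the moment computation.
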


\begin{proof} 

For $\alpha = 2$, we use the result of B{\"ottcher} and Wenzel \cite{Bottcher20081864} who showed that the expectation and variance of the squared norm of the commutator under the conditions of the theorem are given by
$$\mathbb{E} \|\Aa\Bb - \Bb\Aa\|_\mathrm{F}^2 = \frac{2}{n} - \frac{2}{n^3}; \,\,\,\,\, \mathrm{Var}( \|\Aa\Bb - \Bb\Aa\|_\mathrm{F}^2) = \frac{8}{n^4} + \mathcal{O}( n^{-5}).$$
%
Since the expectation of a sum of random variables is equal to the sum of the expectations, we get 
\begin{eqnarray*}
\mathbb{E} \Delta_{2}(\Aa,\Bb,\Cc) &=&  \mathbb{E}  \|\Aa\Bb - \Bb\Aa\|_\mathrm{F}^2
= \frac{2}{n} - \frac{2}{n^3}. 
\end{eqnarray*}
Denoting $X_1 = \|\Aa\Bb - \Bb\Aa\|_\mathrm{F}^2, X_2 = \|\Bb\Cc - \Cc\Bb\|_\mathrm{F}^2, X_3 = \|\Aa\Cc - \Cc\Aa\|_\mathrm{F}^2$, the variance is expressed as 
%
$\mathrm{Var} (\Delta_{2}(\Aa,\Bb,\Cc)) = \sum_{ij} \mathrm{Cov}(X_i,X_j)$,  
%
where $\mathrm{Cov}(X_i,X_i) = \mathrm{Var}(X_i)$. 
Using the Cauchy-Schwartz inequality $\mathrm{Cov}(X_i,X_j) \leq \sqrt{\mathrm{Var}(X_i) \mathrm{Var}(X_j)}$, we can bound 
\begin{eqnarray}
\mathrm{Var} (\Delta_{2}(\Aa,\Bb,\Cc)) &=& \frac{72}{n^4} + \mathcal{O}( n^{-5}). 
\end{eqnarray}
Finally, using the Chebychev inequality, we get
\begin{eqnarray*}
\mathrm{P} (\Delta_{2}<0) &\leq& \mathrm{P} (|\Delta_{2} - \mathbb{E}\Delta_{2}|\geq \mathbb{E}\Delta_{2}) 
\leq \frac{\mathrm{Var}(\Delta_{2})}{(\mathbf{E}\Delta_{2})^2} = \mathcal{O}(n^{-2}), 
\end{eqnarray*}
from which the assertion of the theorem follows.

For $\alpha = 1$, we use the following result (the proof is given in the Appendix):

\begin{lem}Let  $\left\{X_n\right\}_{n\ge 1}$ be a sequence of nonnegative random variables with probability distributions $\left\{f_n(x)\right\}_{n\ge 1}$, with expectation $\mu_n = \mathcal{O}(n^{-1})$ and variance $\sigma^2_n = \mathcal{O}(n^{-4})$. 
Then, 
\begin{eqnarray*}
\mathbb{E}(X_n^{1/2}) &=& \mu_n^{1/2}(1+\mathcal{O}(n^{-2})); \\
\mathrm{Var}(X_n^{1/2}) &=&  \mathcal{O}(n^{-3}).
\end{eqnarray*}
\end{lem}

\noindent Applying Lemma 3.2 to $X_n = \|\Aa\Bb - \Bb\Aa \|_\mathrm{F}^2$, we infer that 
$$\mathbb{E}\|\Aa\Bb - \Bb\Aa \|_\mathrm{F}  = \sqrt{2}(n^{-1} - n^{-3})^{1/2}(1+\mathcal{O}(n^{-2})) = \mathcal{O}(n^{-1/2}),$$ 
implying 
$\mathbb{E} \Delta_{1} =  \mathcal{O}(n^{-1/2})$, and 
$\mathrm{Var}(\|\Aa\Bb - \Bb\Aa \|_\mathrm{F}) = \mathcal{O}(n^{-3})$, 
implying $\mathrm{Var}(\Delta_{1}) =  \mathcal{O}(n^{-3})$. 
Therefore, $\mathrm{P} (\Delta_{1}<0) = \mathcal{O}(n^{-2})$, which completes the proof. 

\end{proof}

Table~\ref{tab:triangel_ineq} and Figure~\ref{fig:triang_ineq} show a numerical simulation, experimentally  confirming the asymptotic behavior of the triangle inequality defect $\Delta_{\alpha}$ for different values of $\alpha$.

\begin{figure}
\center{
  \includegraphics[width=1\linewidth]{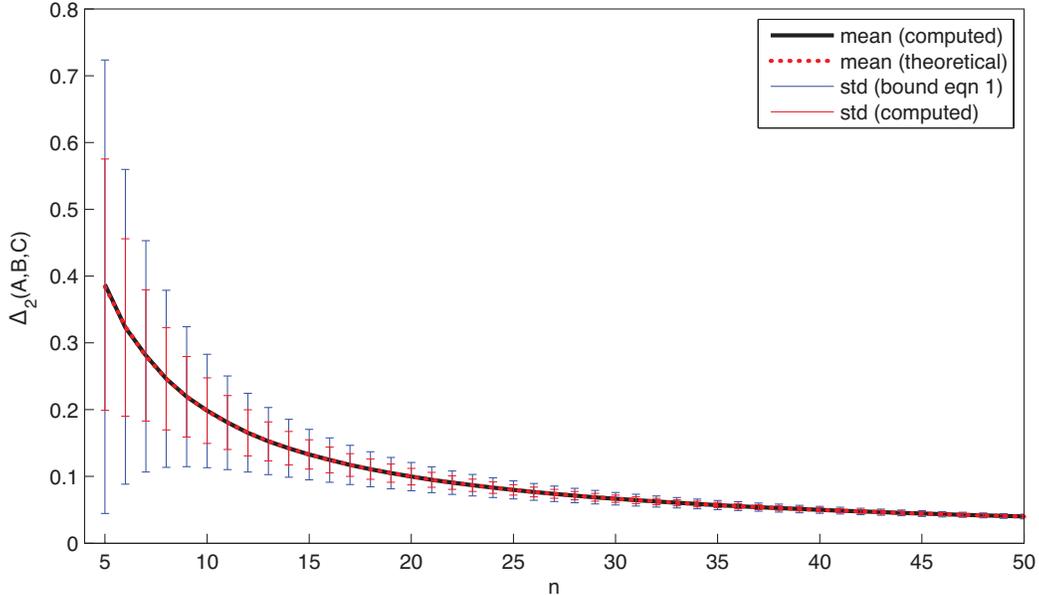}\vspace{-5mm}
  \caption{\label{fig:triang_ineq} \small Expectation and standard deviation (shown are theoretical results and numerical computation on $10^4$ matrices drawn uniformly on the unit sphere) of the triangle inequality defect $\Delta_2$ for different values of $n$.   }
}
\end{figure}

\begin{table*}[htdp]\small
\caption{\small Mean and standard deviation of the triangle inequality defect $\Delta_{\alpha}$ for different values of $n$ and $\alpha$, computed on $10^4$ matrices drawn uniformly on the unit sphere.\vspace{-3mm}
}
\begin{center}
\begin{tabular}{l c c c}
$n$ & $\alpha=0.5$& $\alpha=1$& $\alpha=2$\\
\hline
2 & 0.87 $\pm$ 0.33 					& 0.80 $\pm$ 0.53  					& 0.75 $\pm$ 0.83 \\ 
3 & 0.86 $\pm$ 0.19 					& 0.75 $\pm$ 0.31  					& 0.59 $\pm$ 0.46 \\ 
4 & 0.82 $\pm$ 0.13 					& 0.67 $\pm$ 0.21  					& 0.47 $\pm$ 0.29 \\ 
5 & 0.78 $\pm$ 0.10 					& 0.61 $\pm$ 0.15  					& 0.38 $\pm$ 0.19 \\ 
10 & 0.67 $\pm$ 0.04 					& 0.44 $\pm$ 0.06 					& 0.20 $\pm$ 0.05 \\ 
25 & 0.53 $\pm$ 0.01					& 0.28 $\pm$ 0.01  					& 0.08 $\pm$ 7$\times$10$^{-3}$ \\ 
50 & 0.45 $\pm$ 5$\times$10$^{-3}$ 	& 0.20 $\pm$ 4$\times$10$^{-3}$  	& 0.04 $\pm$ 1$\times$10$^{-3}$ \\ 
100 & 0.38 $\pm$ 2$\times$10$^{-3}$ 	& 0.14 $\pm$ 1$\times$10$^{-3}$  	& 0.02 $\pm$ 4$\times$10$^{-4}$ \\ 
200 & 0.32 $\pm$ 9$\times$10$^{-4}$ 	& 0.10 $\pm$ 6$\times$10$^{-4}$  	& 0.01 $\pm$ 1$\times$10$^{-4}$ \\ 
500 & 0.25 $\pm$ 3$\times$10$^{-4}$ 	& 0.06 $\pm$ 1$\times$10$^{-4}$  	& 4$\times$10$^{-3}$ $\pm$ 2$\times$10$^{-5}$ \\ 
\hline
\end{tabular} 
\end{center}
\label{tab:triangel_ineq}
\end{table*}%

\begin{remark}
Theorem~3.1 holds for other distributions as well. 
B{\"ottcher} and Wenzel \cite{Bottcher20081864} give expressions for the expectation and variance of the squared commutator norm $\|\Aa\Bb - \Bb\Aa\|_\mathrm{F}^2$ for different distributions: 
\begin{enumerate} 
\item $a_{ij}, b_{ij}$ are i.i.d. with standard normal distribution: in this case, 
$$\mathbb{E} \|\Aa\Bb - \Bb\Aa\|_\mathrm{F}^2 = 2n^3-2n; \,\,\,\,\, \mathrm{Var}( \|\Aa\Bb - \Bb\Aa\|_\mathrm{F}^2) =24n^4+ \mathcal{O}( n^{3}),$$
and thus 
$\mathrm{P} (\Delta_{2}<0)  = \mathcal{O}(n^{-2})$. 
\item $a_{ij}, b_{ij}$ are i.i.d. with Rademacher distribution (equi-probable values $\pm1$): the orders of the expectation and variance are the same as in the former case. 
\end{enumerate} 
\end{remark}


%
%
%
%
%
%

%
%

%

\section*{Acknowledgement}

We thank David Wenzel for pointing out an error in the previous version. This research was supported by the ERC Starting Grant No. 307047 (COMET). 

\section*{Appendix} 

\begin{proof}[Proof of Lemma 3.2]
The expectation of $X_n^{1/2}$ is given by 
$$\mathbf{E}X_n^{1/2} = \int_0^\infty \sqrt{x}f_n(x) dx. $$ 
Our evaluation of this integral is based on the `delta method', where we have to take care of the singularity of the second derivative of $\sqrt{x}$ at $x=0$.  
We will need the following inequality several times throughout the proof:
\begin{eqnarray}
\int_{0}^{\frac{\mu_{n}}{2}}f_n(x)dx&=&P(X_n\le {\textstyle \frac{\mu_{n}}{2}}) 
=P(\mu_{n}-X_n\ge {\textstyle \frac{\mu_{n}}{2}})\nonumber\\
&\le&P(\vert \mu_{n}-X_n\vert\ge {\textstyle \frac{\mu_{n}}{2}})
\le 4\frac{\sigma_n^2}{\mu_n^2}=\mathcal{O}(n^{-2}).\label{ineq:first}
\end{eqnarray}
Denote: 
\begin{equation*}
\int_{0}^{\infty}\sqrt{x}f_n(x)dx=\int_{0}^{\frac{\mu_{n}}{2}}\sqrt{x}f_n(x)dx+\int_{\frac{\mu_{n}}{2}}^{\infty}\sqrt{x}f_n(x)dx=I_1+I_2.
\end{equation*}
From our assumption on $\mu_n$ and ($\ref{ineq:first}$), we get
 $$0\le I_1\le\sqrt{\frac{\mu_{n}}{2} }\int_{0}^{\frac{\mu_{n}}{2} }f_n(x)dx = \sqrt{ \mu_n     }   \mathcal{O}(n^{-2}).$$ 

Using the Taylor series for $\sqrt{x}$ at $x_0=\sqrt{\mu_n}$ in the integrand of $I_2$  we obtain
\begin{eqnarray}
I_2&=&\int_{\frac{\mu_{n}}{2}}^{\infty}\left(\sqrt{\mu_n}+\frac{1}{2\sqrt{\mu_n}}(x-\sqrt{\mu_n})+R_1(x) \right)f_n(x)dx\nonumber\\
&=&\sqrt{\mu_n}\int_{\frac{\mu_{n}}{2}}^{\infty}f_n(x)dx+\frac{1}{2\sqrt{\mu_n}}\int_{\frac{\mu_{n}}{2}}^{\infty}(x-\mu_n)f_n(x)dx
+\int_{\frac{\mu_{n}}{2}}^{\infty}R_1(x)f_n(x)dx\nonumber\\
&=&I_{21}+I_{22}+I_{23}.\label{eq:three}
\end{eqnarray}
We have
$I_{21}\le \sqrt{\mu_n}\int_{0}^{\infty}f_n(x)dx=\sqrt{\mu_n}$, and by (\ref{ineq:first}), 
\begin{eqnarray*}
I_{21}&=&\sqrt{\mu_n}P(X_n\ge {\textstyle \frac{\mu_n}{2}}) 
=\sqrt{\mu_n}(1-P(X_n\le {\textstyle \frac{\mu_n}{2}})
\ge\sqrt{\mu_n}(1-4{\textstyle \frac{\sigma_n^2}{\mu_n^2}}). 
\end{eqnarray*}
This yields $\sqrt{\mu_n}(1-4\frac{\sigma_n^2}{\mu_n^2})\le I_{21} \le \sqrt{\mu_n}$, 
and thus
\begin{equation*}
I_{21}=\sqrt{\mu_n}(1+\mathcal{O}(n^{-2})).
\end{equation*}
Employing~(\ref{ineq:first}) again, we have for the second integral of (\ref{eq:three}):
\begin{eqnarray*}
I_{22}&=&\frac{1}{2\sqrt{\mu_n}}\int_{\frac{\mu_{n}}{2}}^{\infty}(x-\mu_n)f_n(x)dx\\
&=&\frac{1}{2\sqrt{\mu_n}}\left (\int_{0}^{\infty}(x-\mu_n)f_n(x)dx-\int_{0}^{\frac{\mu_{n}}{2}}(x-\mu_n)f_n(x)dx \right)\\
&=&\frac{1}{2\sqrt{\mu_n}}\left(0-\int_{0}^{\frac{\mu_{n}}{2}}(x-\mu_n)f_n(x)dx \right)\\
&\le&\frac{1}{2\sqrt{\mu_n}}\mu_n\int_{0}^{\frac{\mu_{n}}{2}}f_n(x)dx\\
&=&\sqrt{\mu_{n}}\mathcal{O}(n^{-2}). 
\end{eqnarray*}

For the third integral of (\ref{eq:three}), we use the Taylor remainder formula for $g(x) = \sqrt{x}$:
\begin{equation*}
R_1(x)=\frac{g^{''}(\xi)}{2!}(x-\mu_n)^2 =-\frac{1}{8}\xi^{-3/2}(x-\mu_n)^2, 
\end{equation*}
for some $\xi \in [\frac{\mu_n}{2}, \infty]$ depending on $x$. It then follows that 
\begin{eqnarray*}
|I_{23}| &=& \int_{\frac{\mu_{n}}{2}}^{\infty}|R_1(x)|f_n(x)dx 
= \int_{\frac{\mu_{n}}{2}}^{\infty} \frac{\xi^{-3/2}}{8} (x-\mu_n)^2f_n(x)dx\\
&\le& \max_{\xi \in [\frac{\mu_n}{2},\infty]}\frac{\xi^{-3/2}}{8} \int_{0}^{\infty}(x-\mu_n)^2f_n(x)dx = \frac{1}{\sqrt{8}} \sqrt{\mu_n} \mu_n^{-2} \sigma_n^2 = \sqrt{\mu_n}\mathcal{O}(n^{-2}).
\end{eqnarray*}
%


Combining the results, we get $I_1 + I_2 = \sqrt{\mu_n}(1 + \mathcal{O}(n^{-2}))$, which proves the Lemma concerning the expectation.
For the variance, we use the relation 
\begin{eqnarray*}
\mathrm{Var} (X_n^{1/2}) &=& \mathbb{E} X_n - (\mathbb{E}X_n^{1/2})^2 = \mu_n - \mu_n (1 + \mathcal{O}(n^{-2}))^2 \\
&=& \mu_n(\mathcal{O}(n^{-2}) + \mathcal{O}(n^{-4})) = \mathcal{O}(n^{-3}). 
\end{eqnarray*}

\end{proof}

\bibliographystyle{plain}\small
\bibliography{Spectral.bib}

\end{document}